
\documentclass[11pt]{amsart}


\usepackage{amscd}
\usepackage{amsfonts}
\usepackage{amsmath}
\usepackage{amssymb}
\usepackage{amsthm}
\usepackage[UKenglish]{babel}
\usepackage{bbm} 
\usepackage{cases}
\usepackage{chngcntr}
\usepackage{cite}
\usepackage{color}
\usepackage{graphicx}
\usepackage{latexsym}
\usepackage{mathtools}
\usepackage{microtype}
\usepackage{qsymbols}
\usepackage[active]{srcltx}
\usepackage{url}

\usepackage{hyperref}
\usepackage{cleveref}


\usepackage{tikz,tikz-cd}
\usetikzlibrary{matrix, arrows}

\makeatletter
\tikzset{
	edge node/.code={%
		\expandafter\def\expandafter\tikz@tonodes\expandafter{\tikz@tonodes #1}}}
\makeatother
\tikzset{
	subseteq/.style={
		draw=none,
		edge node={node [sloped, allow upside down, auto=false]{$\subseteq$}}},
	Subseteq/.style={
		draw=none,
		every to/.append style={
			edge node={node [sloped, allow upside down, auto=false]{$\subset$}}}}
}

\newtheorem{theorem}{Theorem}[section]
\newtheorem{proposition}[theorem]{Proposition}
\newtheorem{lemma}[theorem]{Lemma}
\newtheorem{corollary}[theorem]{Corollary}

\theoremstyle{definition}
\newtheorem{definition}[theorem]{Definition}

\theoremstyle{remark}
\newtheorem{remark}[theorem]{Remark}


\usepackage{calrsfs}
\usepackage[charter]{mathdesign}
\newcommand{\bbfont}{\mathbb}


\newcommand{\catfont}[1]{\textup{\fontfamily{qag}\selectfont{\footnotesize #1}}}
\newcommand{\catfontscript}[1]{\textup{\fontfamily{qag}\selectfont{\tiny{#1}}}}



\newcommand{\RR}{{\bbfont R}}



\newcommand{\norm}[1]{{\lVert #1 \rVert}}


\newcommand{\Fun}{\mathrm{Fun}\hskip 0.07em}
\newcommand{\co}{\mathrm{co}\hskip 0.07em}
\newcommand{\fact}[1]{{\overline{#1}}}



\newcommand{\catsymbol}{\catfont{Cat}}
\newcommand{\catone}{\catsymbol_1}
\newcommand{\cattwo}{\catsymbol_2}
\newcommand{\catthree}{\catsymbol_3}

\newcommand{\catsymbolscript}{\catfontscript{Cat}}
\newcommand{\catonescript}{{\catsymbolscript}_1}
\newcommand{\cattwoscript}{{\catsymbolscript}_2}
\newcommand{\catthreescript}{\catsymbolscript_3}

\newcommand{\functor}{U}


\newcommand{\SET}{\catfont{Set}}
\newcommand{\VS}{\catfont{VS}}
\newcommand{\VL}{\catfont{VL}}

\newcommand{\SETscript}{\catfontscript{Set}}
\newcommand{\VSscript}{\catfontscript{VS}}
\newcommand{\VLscript}{\catfontscript{VL}}


\newcommand{\BS}{\catfont{BS}}
\newcommand{\BL}{\catfont{BL}}

\newcommand{\BSscript}{\catfontscript{BS}}
\newcommand{\BLscript}{\catfontscript{BL}}


\newcommand{\freeletter}{\textup{F}}
\newcommand{\Free}[3]{\freeletter_{#1}^{#2}\!\left[#3\right]}

\newcommand{\FCATONECATTWO}[1]{\Free{\catonescript}{\cattwoscript}{#1}}
\newcommand{\FCATTWOCATTHREE}[1]{\Free{\cattwoscript}{\catthreescript}{#1}}
\newcommand{\FCATONECATTHREE}[1]{\Free{\catonescript}{\catthreescript}{#1}}

\newcommand{\FSETVS}[1]{\Free{\SETscript}{\VSscript}{#1}}
\newcommand{\FSETVL}[1]{\Free{\SETscript}{\VLscript}{#1}}
\newcommand{\FSETBL}[1]{\Free{\SETscript}{\BLscript}{#1}}
\newcommand{\FSETBS}[1]{\Free{\SETscript}{\BSscript}{#1}}
\newcommand{\FVSVL}[1]{\Free{\VSscript}{\VLscript}{#1}}
\newcommand{\FBSBL}[1]{\Free{\BSscript}{\BLscript}{#1}}


\numberwithin{equation}{section}
\allowdisplaybreaks

\usepackage[nobysame]{amsrefs}
\AtBeginDocument{\def\MR#1{}}


\crefname{theorem}{Theorem}{Theorems}
\crefname{proposition}{Proposition}{Propositions}
\crefname{lemma}{Lemma}{Lemmas}
\crefname{corollary}{Corollary}{Corollaries}
\crefname{definition}{Definition}{Definitions}
\crefname{example}{Example}{Examples}
\crefname{examples}{Examples}{Examples}
\crefname{remark}{Remark}{Remarks}
\crefname{equation}{equation}{equations}

\crefname{section}{Section}{Sections}
\crefname{subsection}{Section}{Sections}
\crefname{subsubsection}{Section}{Sections}


\begin{document}

%
%
%
%
%


\title [Free vector lattices over vector spaces as function lattices]{Free vector lattices over vector spaces as function lattices}

\author[Marcel de Jeu]{Marcel de Jeu}

\address{Mathematical Institute\\
	Leiden University\\
	P.O.\ Box 9512\\
	2300 RA Leiden\\
	the Netherlands\\
	and\\
	Department of Mathematics and Applied Mathematics\\
	University of Pretoria\\
	Corner of Lynnwood Road and Roper Street\\
	Hatfield 0083\\
	Pretoria\\
	South Africa
}
\email{mdejeu@math.leidenuniv.nl}

\keywords{Free vector lattice, function lattice, free Banach lattice, Banach space}

\subjclass{Primary 46A40; Secondary 06B15, 46B42}

\begin{abstract}
	We show that a free vector lattice over a real vector space $V$ can be realised canonically as a vector lattice of real-valued positively homogeneous functions on any linear subspace of its dual space that separates the points of $V$. This is used to give intuition for the known fact that the free Banach lattice over a Banach space $E$ can be realised as a Banach lattice of positively homogeneous functions on $E^\ast$. It is also applied to improve the well-known result that free vector lattices over non-empty sets can be realised as vector lattices of real-valued functions. For infinite sets, the underlying spaces for such realisations can be chosen to be smaller than the usual ones.
\end{abstract}

\maketitle


\section{Introduction and context}\label{sec:introduction_and_context}

The first papers in which Banach lattices with universal properties are considered appear to be \cite{fremlin:1974} and \cite{haydon:1977}. Some forty years later, the interest in this was revived by de Pagter and Wickstead in \cite{de_pagter_wickstead:2015} and subsequently by Avil\'{e}s, Rodr\'{\i}guez, and Tradacete in \cite{aviles_rodriguez_tradacete:2018}. This has been followed up by a good number of papers on free and projective Banach lattices; see \cite{aviles_martinez-cervantes_rodriguez_tradacete:2022,aviles_martinez-cervantes_rodriguez-abellan:2020,aviles_martinez-cervantes_rodriguez-abellan:2021,aviles_martinez-cervantes_rodriguez-abellan_rueda-zoca:2022a,aviles_martinez-cervantes_rodriguez-abellan_rueda-zoca:2022b,aviles_plebanek_rodriguez-abellan:2018,aviles_rodriguez-abellan_UNPUBLISHED:2020a,aviles_rodriguez-abellan_UNPUBLISHED:2020b,aviles_rodriguez-abellan:2019_a,aviles_rodriguez-abellan:2019_b,aviles_tradacete_villanueva:2019,dantas_martinez-cervantes_rodriguez-abellan_rueda-zoca:2021,dantas_martinez-cervantes_rodriguez-abellan_rueda_zoca:2022,de_hevia_tradacete_UNPUBLISHED:2022,jardon-sanchez_laustsen_taylor_tradacete_troitsky:2022,oikhberg_taylor_tradacete_troitsky_UNPUBLISHED:2022,troitsky:2019}.

The object of study in the fundamental paper \cite{aviles_rodriguez_tradacete:2018} is the free Banach lattice over a Banach space, where the applicable notion of freeness is a special case of a general categorical notion. To make this specific, and also as a preparation for the remainder of the paper, we digress briefly to include some material on categories.

The following definition of free objects is that in \cite[Definition~8.22]{adamek_herrlich_strecker_ABSTRACT_AND_CONCRETE_CATEGORIES_THE_JOY_OF_CATS:2006}.

\begin{definition}\label{def:free_object}
	Suppose that $\catone$ and $\cattwo$ are categories, and that $\functor\colon\cattwo\mapsto\catone$ is a faithful functor.\footnote{Recall that $\functor$ is \emph{faithful} when the associated map $\functor\colon\mathrm{Hom}_{\cattwoscript}(O_2,O_2^\prime)\to\mathrm{Hom}_{\catonescript}(\functor(O_2),\functor(O_2^\prime))$ is injective for all objects $O_2,O_2^\prime$ of $\cattwo$}. Take an object $O_1$ of $\catone$. Then a \emph{free object over $O_1$ of $\cattwo$ with respect to $\functor$} is a pair $(j,\FCATONECATTWO{O_1})$, where $\FCATONECATTWO{O_1}$ is an object of $\cattwo$ and $j:O_1\to\FCATONECATTWO{O_1}$ is a morphism of $\catone$,
	with the property that, for every object $O_2$ of $\cattwo$ and every morphism $\varphi:O_1\to O_2$ of $\catone$, there exists a unique morphism  $\fact{\varphi}: \FCATONECATTWO{O_1}\to O_2$ of $\cattwo$ such that the diagram
	\begin{equation*}\label{dia:free_object}
		\begin{tikzcd}
			O_1\arrow[r, "j"]\arrow[dr, "\varphi", swap]& \functor\left(\FCATONECATTWO{O_1}\right)\arrow[d, "\functor(\fact{\varphi})"]
			\\ & \functor(O_2)
		\end{tikzcd}
	\end{equation*}
	in $\catone$ is commutative.
\end{definition}

A pair $(j,\FCATONECATTWO{O_1})$ as in \cref{def:free_object} need not exist. However, if such a pair exists, and if  $(j^\prime,\FCATONECATTWO{O_1}^\prime)$ is another such pair, then a standard argument shows that the unique morphism  $\fact{j^\prime}:\FCATONECATTWO{O_1}\to\FCATONECATTWO{O_1}^\prime$ of $\cattwo$ such that $\functor(\fact{j^\prime})\circ j=j^\prime$ is, in fact, an isomorphism. In particular, a free object over $O_1$ of $\cattwo$ with respect to $\functor$, if it exists, is determined up to an isomorphism of $\cattwo$.  We shall, therefore, sometimes speak of `the' free object $\FCATONECATTWO{O_1}$ over $O_1$ of $\cattwo$, the accompanying morphism $j$ and the functor $\functor$ being tacitly understood from the context. Also, when a pair $(j,O)$ satisfies \cref{def:free_object}, where $O$ is an object of $\cattwo$, then we shall say that $O$ is a \emph{realisation} of the free object over $O_1$ of $\cattwo$, or that $O$ is a realisation of $\FCATONECATTWO{O_1}$.

Free objects have the following transitivity property, which is easily verified. Let $\catone$, $\cattwo$, and $\catthree$ be categories. Suppose that $\functor_{12}:\cattwo\to\catone$ and $\functor_{23}:\catthree\to\cattwo$ are faithful functors, so that also $\functor_{12}\circ \functor_{23}:\catthree\to\catone$ is a faithful functor. Take an object $O_1$ of $\catone$, and suppose that $\FCATONECATTWO{O_1}$ exists in $\cattwo$ with respect to $\functor_{12}$, with accompanying morphism $j_{12}:O_1\to\functor_{12}\left(\FCATONECATTWO{O_1}\right)$ of $\catone$. Suppose that $\FCATTWOCATTHREE{\FCATONECATTWO{O_1}}$ exists in $\catthree$ with respect to $\functor_{23}$, with accompanying morphism $j_{23}:\FCATONECATTWO{O_1}\to\functor_{23}\left(\FCATTWOCATTHREE{\FCATONECATTWO{O_1}}\right)$ of $\cattwo$. Then $\FCATONECATTHREE{O_1}$ exists in $\catthree$ with respect to $\functor_{12}\circ\functor_{23}$. In fact, one can take $\FCATONECATTHREE{O_1}\coloneqq\FCATTWOCATTHREE{\FCATONECATTWO{O_1}}$ and $j_{13}\coloneqq \functor_{12}(j_{23})\circ j_{12}$ as accompanying morphism $j_{13}:O_1\to \left(\functor_{12}\circ\functor_{23}\right)\left(\FCATONECATTHREE{O_1}\right)$ of $\catone$.

In the present paper, we shall be concerned only with the category $\SET$ of sets with arbitrary maps as morphisms; the category $\VS$ of real vector spaces with linear maps as morphisms; the category $\VL$ of real vector lattices with vector lattice homomorphisms as morphisms; the category $\BS$ of real Banach spaces with contractions as morphisms; and the category $\BL$ of real Banach lattices with contractive vector lattice homomorphisms as morphisms. There are canonical forgetful functors between various pairs of these categories, and it is these forgetful functors that will always tacitly be used as the functor $\functor$ in \cref{def:free_object} when speaking of free objects of one of these categories over an object of another of these categories.

We now return to our introductory context of free Banach lattices over Banach spaces. In the categorical terminology of \cref{def:free_object}, it is a part of the statement of \cite[Theorem~2.5]{aviles_rodriguez_tradacete:2018} that $\FBSBL{E}$ exists for every Banach space $E$.\footnote{The definition of a free Banach lattice over a Banach space in \cite[p.~2956]{aviles_rodriguez_tradacete:2018} is easily seen to be equivalent to the one in the present paper with respect to the canonical forgetful functor. Some caution is, however, advised with the terminology of free Banach lattices over sets as it is used in \cite{de_pagter_wickstead:2015} because the definition \cite[Definition~4.1]{de_pagter_wickstead:2015} of such objects involves only bounded (rather than arbitrary) maps from sets into Banach lattices. It is, in fact, not difficult to show that, for an infinite set $S$, $\FSETBL{S}$ in the sense of \cref{def:free_object} does not exist in $\BL$; not even  when one takes arbitrary, not necessarily contractive, vector lattice homomorphisms as the morphisms of $\BL$. Likewise, when $S$ is an infinite set, then $\FSETBS{S}$ in the sense of \cref{def:free_object} does not exist in $\BS$; not even when one takes arbitrary, not necessarily contractive, bounded linear maps as the morphisms of $\BS$.} The full statement is, however, much more precise: $\FBSBL{E}$ has a realisation as an explicitly given Banach lattice $FBL[E]$ of real-valued positively homogeneous functions on the norm dual $E^\ast$ of $E$. The accompanying contraction $j:E\to FBL[E]$ (which is, in fact, an isometric embedding) is obtained by setting
\begin{equation}\label{eq:canonical_embedding}
	[j(x)](x^\ast)\coloneqq x^\ast(x)
\end{equation}
for $x\in E$ and $x^\ast\in E^\ast$. This (ingenious) proof of the existence of $\FBSBL{E}$ has the `drawback' that it is not at all trivial to surmise that the Banach lattice $FBL[E]$ is the sought object. On the other hand: once this has been seen to be true, it can be used fruitfully, in \cite{aviles_rodriguez_tradacete:2018} and its sequels, as the starting point for a further study of the structure of $\FBSBL{E}$ and its relation with $E$.

An alternate, more abstract, approach to the existence of $\FBSBL{E}$ is pointed out in \cite[p.~104--106]{de_jeu:2021}. Given a Banach space $E$, one first considers $E$ merely as a vector space. The free vector lattice $\FVSVL{E}$ over $E$ exists,\footnote{The existence of a free vector lattice over a vector space $V$ is one the statements in \cite[Theorem~6.2]{de_jeu:2021} on the existence of several free objects with a vector lattice structure. Alternatively, one can observe that a free vector lattice over a basis $S$ of $V$ is also a free vector lattice over $V$, and use the classical fact that the free vector lattice over a set exists; see \cite{baker:1968,bleier:1973,birkhoff:1942,weinberg:1963}.} and it is surprisingly easy to introduce a vector lattice seminorm $\rho$ on $\FVSVL{E}$ such that the completion of $\FVSVL{E}/\mathrm{Ker\,}\rho$ is routinely verified to be $\FBSBL{E}$. The contours of this elementary approach, where the norm is introduced via an abstract rather than a more concrete definition as in \cite{aviles_rodriguez_tradacete:2018}, are already visible in \cite{troitsky:2019}.\footnote{The earliest occurrence of this idea that the author is aware of, is the construction of the enveloping $\mathrm{C}^\ast$-algebra of an involutive Banach algebra $A$ with an approximate identity; see \cite[Section~2.7]{dixmier_C-STAR-ALGEBRAS_ENGLISH_NORTH_HOLLAND_EDITION:1977}. Using the fact that every $\mathrm{C}^\ast$-algebra is isomorphic to a $\mathrm{C}^\ast$-algebra of operators on a Hilbert space, it is easily seen that the enveloping $\mathrm{C}^\ast$-algebra of $A$ that is defined in \cite[2.7.2]{dixmier_C-STAR-ALGEBRAS_ENGLISH_NORTH_HOLLAND_EDITION:1977} is precisely the free $\mathrm{C}^\ast$-algebra over $A$ in the sense of \cref{def:free_object} with respect to the canonical forgetful functor.} Reversing the pros and cons of \cite[Theorem~2.5]{aviles_rodriguez_tradacete:2018}, this approach to the existence of $\FBSBL{E}$ has the drawback that it does not yield any information about its structure, but it has the advantage that it is completely elementary, once one has the existence of $\FVSVL{E}$ to start with. Moreover, this approach can also be used to routinely show the existence of Banach lattice algebras with universal properties. Such non-commutative objects cannot be Banach lattice algebras of functions. It is tempting to conjecture that they can be realised as Banach lattice algebras of operators, but it remains to be seen to what extent this is true. At any rate, a proof of their existence `by pointing them out' as in \cite[Theorem~2.5]{aviles_rodriguez_tradacete:2018} seems infeasible at the time of writing.  A method analogous to that on \cite[p.~104--106]{de_jeu:2021} is presently the only way to construct them, where the free vector lattice algebras that are needed as a starting point are supplied by \cite[Theorem~6.2]{de_jeu:2021}.

\medskip

\noindent
The main result of the present paper, \cref{res:free_vector_lattice_over_vector_space_as_lattice_of_functions}, provides a connection between the proofs of the existence of $\FBSBL{E}$ for a Banach space $E$ that are given in \cite{aviles_rodriguez_tradacete:2018} and in \cite{de_jeu:2021}. Its consequence \cref{res:consequences_for_free_vector_lattice_over_vector_space} shows that the approach in \cite{de_jeu:2021} as outlined above can be simplified a little further. We shall now explain this.

\cref{res:free_vector_lattice_over_vector_space_as_lattice_of_functions} asserts that, for a real vector space $V$ and any vector space $L^\sharp$ of linear functionals on $V$ that separates the points of $V$, the free vector lattice $\FVSVL{V}$ over $V$ can be realised as the vector lattice of positively homogeneous functions on $L^\sharp$ that is generated by the functions on $L^\sharp$ that originate from $V$ in the canonical way. In particular, for a Banach space $E$, where $E^\ast$ separates the points of $E$, \cref{res:free_vector_lattice_over_vector_space_as_lattice_of_functions} shows that $\FVSVL{E}$ can be realised as the vector lattice of positively homogeneous functions on $E^\ast$ that is generated by the set $\{j(x): x\in E\}$ of functions on $E^\ast$, where $j(x)$ is defined as in \cref{eq:canonical_embedding}. For each contraction $\varphi:E\to F$, where $F$ is a Banach lattice, we let $\fact{\varphi}: \FVSVL{E}\to F$ denote the unique vector lattice homomorphism such that $\fact{\varphi}\circ j = \varphi$. For $\xi\in\FVSVL{E}$, we then set
\[
\rho(\xi)\coloneqq\sup\{\norm{\fact{\varphi}(\xi)}: F \text{ is a Banach lattice and }\varphi:E\to F \text{ is a contraction}\}.
\]
It is pointed out in \cite{de_jeu:2021} that $\rho$ is a (finite-valued) vector lattice seminorm on $\FVSVL{E}$, and that $\FBSBL{E}$ is the completion of the vector lattice $\FVSVL{E}/ \mathrm{Ker}\,\rho$ in the induced vector lattice norm. What was not yet known at the time of writing of \cite{de_jeu:2021} is that $\rho$ is actually a norm. This follows from \cref{res:consequences_for_free_vector_lattice_over_vector_space}, which implies that the vector lattice homomorphisms $\fact{x^\ast}:\FVSVL{E}\to\RR$ for $x^\ast$ in $E^\ast$ with $\norm{x^\ast}\leq 1$ already separate the points of $\FVSVL{E}$. Consequently, $\FBSBL{E}$ is the completion of the vector lattice $\FVSVL{E}$ of positively homogeneous functions on $E^\ast$ in  the vector lattice norm $\rho$.

The categorical definition of $\FBSBL{E}$ and its equivalent definition in \cite{aviles_rodriguez_tradacete:2018} do not at all point to the possibility of realising it as a Banach lattice of positively homogeneous functions on $E^\ast$. This pleasant surprise is simply `observed' in \cite[Theorem~2.5]{aviles_rodriguez_tradacete:2018}. The fact, obtained by combining \cite[p.~104--106]{de_jeu:2021}, \cref{res:free_vector_lattice_over_vector_space_as_lattice_of_functions}, and \cref{res:consequences_for_free_vector_lattice_over_vector_space}, that $\FBSBL{E}$ is the completion of a vector lattice of positively homogeneous functions on $E^\ast$ does not fully predict this possibility (because a completion of a function space need not be a function space), but it \emph{does} make it more plausible.

The above leads to a natural question. Suppose that $E$ is a Banach space, and that $L$ is a linear subspace of $E^\ast$ that separates the points of $E$. Then an argument completely analogous o the above shows that $\FBSBL{E}$ is the completion of a canonical vector lattice of positively homogeneous functions on $L$ in the norm $\rho$. When is $\FBSBL{E}$ again a Banach lattice of (positively homogeneous) \emph{functions} on $L$, as we know from \cite[Theorem~2.5]{aviles_rodriguez_tradacete:2018} to be the case for $L=E^\ast$?

\medskip

\noindent In addition to its intrinsic interest, and its role in a more intuitive understanding of the nature of $\FBSBL{E}$, our main result \cref{res:free_vector_lattice_over_vector_space_as_lattice_of_functions} has an application to free vector lattices over sets; see \cref{res:minimal_model}.

\section{Free vector lattices over vector spaces as function lattices}\label{sec:free_vector_lattices_over_vector_spaces_as_function_lattices}

\noindent In this section, all vector spaces are over the real numbers. When $V$ is a vector space, then we write $V^\sharp$ for the vector space of all linear functionals on $V$. We let $\co(S)$ denote the convex hull of a non-empty subset $S$ of a vector space. When $S$ is a non-empty set, we let $\Fun(S)$ denote the real-valued functions on $S$, and we write $\Fun_{00}(S)$ for the real-valued functions on $S$ with finite support. We do \emph{not} suppose that vector lattices are Archimedean.

Let $V$ be a vector space. As was observed above, there exists a free vector lattice $(j,\FVSVL{V})$ over $V$. Take a vector space $L^\sharp$ of linear functionals on $V$ that separates the points of $V$. It is our aim to show that $\FVSVL{V}$ is vector lattice isomorphic to a canonical vector lattice of positively homogeneous functions on $L^\sharp$; see \cref{res:free_vector_lattice_over_vector_space_as_lattice_of_functions}. The proofs are inspired by those in \cite{bleier:1973}, where it is shown that, for a non-empty set $S$, the free vector lattice over $S$ can be realised as a function lattice.

Since the linear maps from $V$ into vector lattices separate the points of $V$, the linear map $j:V\to\FVSVL{V}$ is injective. Obviously, the vector sublattice of $\FVSVL{V}$ that is generated by $j(V)$ also has the universal property as in \cref{def:free_object}. It then follows easily from the aforementioned uniqueness of a free object up to isomorphism that $j(V)$ generates $\FVSVL{E}$ as a vector lattice. After identifying $V$ and $j(V)$, we have that $V$ generates $\FVSVL{V}$ as a vector lattice.

We continue with two preparatory lemmas.

\begin{lemma}[Bleier]\label{res:bleier_one}
	Let $E$ be a vector lattice, let $k\geq 1$, and let $e_1,\dotsc,e_k\in E$. Suppose that $0\in\co\{e_1,\dotsc,e_k\}$. Then $\bigwedge_{i=1}^k e_i\leq 0$.
\end{lemma}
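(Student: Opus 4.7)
The plan is to express the hypothesis $0 \in \co\{e_1, \ldots, e_k\}$ as $0 = \sum_{i=1}^k \lambda_i e_i$ with coefficients $\lambda_i \geq 0$ summing to $1$, and then to exploit the fact that $\bigwedge_{i=1}^k e_i$ is by definition a lower bound for each $e_i$.

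Concretely, set $m \coloneqq \bigwedge_{i=1}^k e_i$. Then $m \leq e_i$ for every $i$, and since $\lambda_i \geq 0$, multiplication by $\lambda_i$ preserves the inequality, yielding $\lambda_i m \leq \lambda_i e_i$ in the vector lattice $E$. Summing these $k$ inequalities and using $\sum_{i=1}^k \lambda_i = 1$ gives
\[
m \;=\; \biggl(\sum_{i=1}^k \lambda_i\biggr) m \;=\; \sum_{i=1}^k \lambda_i m \;\leq\; \sum_{i=1}^k \lambda_i e_i \;=\; 0,
\]
which is the desired conclusion.

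There is essentially no obstacle here: the lemma is a direct consequence of order-compatibility of scalar multiplication by non-negative reals and of finite addition in a vector lattice, both of which hold without any Archimedean assumption. The only thing one has to be careful about is to use the non-negativity of the $\lambda_i$ when passing from $m \leq e_i$ to $\lambda_i m \leq \lambda_i e_i$; this is exactly where the convexity (as opposed to a general affine combination) hypothesis enters.
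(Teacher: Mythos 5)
Your proof is correct and is essentially identical to the one given in the paper: write $0=\sum_i \lambda_i e_i$ as a convex combination, use $\lambda_i\bigl(\bigwedge_j e_j\bigr)\leq\lambda_i e_i$, and sum. Nothing to add.
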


\begin{proof}
	Bleier's original proof of \cite[Lemma~2.1]{bleier:1973} uses the non-trivial fact that every vector lattice is a subdirect product of totally ordered vector lattices. For totally ordered vector lattices the result is easily seen to be true, and the general statement then follows. The following alternate proof is completely elementary.
	
	There exist $r_1,\dotsc,r_k\geq 0$ with $\sum_{i=1}^k r_i=1$ such that $\sum_{i=1}^k r_i e_i=0$.
	Take $j$ with $1\leq j\leq k$. Then $\bigwedge_{i=1}^ke_i\leq e_j$, so $r_j\bigwedge_{i=1}^ke_i\leq r_j e_j$. Hence $\bigwedge_{i=1}^k e_i=\sum_{j=1}^k \left(r_j \bigwedge_{i=1}^k e_i\right)\leq\sum_{j=1}^k r_j e_j=0$.
\end{proof}

The following is similar to \cite[Theorem~2.2]{bleier:1973}.

\begin{lemma}\label{res:bleier_two}
	Let $E$ and $F$ be vector lattices, and let $\fact\Psi: E\to F$ be a vector lattice homomorphism. Suppose that $E$ is generated as a vector lattice by a linear subspace $V$ with the property that $0\in\co\{v_1,\dotsc,v_k\}$ whenever $k\geq 1$ and $v_1,\dotsc,v_k\in V$ are such that $\bigwedge_{i=1}^k \fact\Psi(v_i)\leq 0$.
	Then $\fact\Psi$ is injective on $E$.
\end{lemma}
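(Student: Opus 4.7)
The plan is to reduce injectivity of $\fact\Psi$ to the positivity implication $\fact\Psi(e)\leq 0\Rightarrow e\leq 0$; by applying this to both $e$ and $-e$, one then recovers $\fact\Psi(e)=0\Rightarrow e=0$ (equivalently, one can argue via $\fact\Psi(|e|)=|\fact\Psi(e)|=0$ and $|e|\geq 0$). So the task becomes: given $e\in E$ with $\fact\Psi(e)\leq 0$, show that $e\leq 0$.

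The key auxiliary step is a normal-form lemma: every element of $E$ can be written as $\bigvee_{i=1}^m \bigwedge_{j=1}^{n_i} v_{ij}$ with $v_{ij}\in V$. To prove this, I would let $\mathcal{S}\subseteq E$ be the set of all such finite join-of-meets expressions and verify that $\mathcal{S}$ contains $V$ and is stable under $\vee$, $\wedge$, addition, and scalar multiplication; since $V$ generates $E$ as a vector lattice, this forces $\mathcal{S}=E$. Stability under $\vee$ is immediate; stability under $\wedge$ uses the distributive law $(\bigvee_i a_i)\wedge(\bigvee_j b_j)=\bigvee_{i,j}(a_i\wedge b_j)$. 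For addition, since $V$ is linear and $+$ distributes over both $\vee$ and $\wedge$, one can push a sum $e+e'$ inside to obtain a meet-of-joins-of-meets of elements of $V$, and then collapse the outer meet-of-joins into a join-of-meets via the distributive law $\bigwedge_j\bigvee_{k\in K_j}a_{jk}=\bigvee_{(k_j)\in\prod K_j}\bigwedge_j a_{j,k_j}$. For multiplication by $\alpha<0$, negation turns a join-of-meets into a meet-of-joins, which again collapses by the same distributive identity.

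With the normal form in hand, the rest is immediate. Writing $e=\bigvee_{i=1}^m\bigwedge_{j=1}^{n_i} v_{ij}$, the assumption $\fact\Psi(e)\leq 0$ and the fact that $\fact\Psi$ is a vector lattice homomorphism give $\bigvee_i\bigwedge_j\fact\Psi(v_{ij})\leq 0$, whence $\bigwedge_j\fact\Psi(v_{ij})\leq 0$ for each $i$. The hypothesis on $V$ then yields $0\in\co\{v_{i1},\dotsc,v_{in_i}\}$ for each $i$, and Bleier's \cref{res:bleier_one} gives $\bigwedge_j v_{ij}\leq 0$ for each $i$; taking the join over $i$ produces $e\leq 0$, as required.

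The main obstacle is really the normal-form lemma: the only non-trivial computations are to check that the set of join-of-meet expressions is closed under addition and under multiplication by negative scalars, both of which hinge on the infinite distributive identity collapsing a meet-of-joins back to a join-of-meets. Once that is verified, the argument is a routine application of \cref{res:bleier_one} together with the hypothesis on $V$. (This is also why the analogy with \cite[Theorem~2.2]{bleier:1973} is tight: Bleier's argument performs exactly this reduction to a normal form, although there in the context of the free vector lattice over a set rather than over a vector space.)
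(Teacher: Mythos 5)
Your proof is correct and follows essentially the same route as the paper: write the element in join-of-meets normal form over $V$, push $\fact\Psi$ through, invoke the hypothesis together with \cref{res:bleier_one} to get each meet $\leq 0$, and apply the argument to both $e$ and $-e$. The only difference is that you sketch a proof of the normal-form lemma, whereas the paper simply cites it (to \cite[Exercise~4.1.8]{aliprantis_burkinshaw_POSITIVE_OPERATORS_SPRINGER_REPRINT:2006}); your sketch of that lemma is sound.
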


\begin{proof}
	Suppose that $x\in E$ is such that $\fact\Psi(x)=0$. Since the linear subspace $V$ of $E$ generates $E$, there exist an integer $N\geq 1$, integers $M_j\geq 1$ for $j=1,\dotsc,N$, and elements $v_{ji}$ of $V$ for $j=1,\dotsc,N$ and $i=1,\dotsc,M_j$ such that $x=\bigvee_{j=1}^N\bigwedge_{i=1}^{M_j} v_{ji}$; see \cite[Exercise~4.1.8]{aliprantis_burkinshaw_POSITIVE_OPERATORS_SPRINGER_REPRINT:2006}, for example. Then $\bigvee_{j=1}^N\bigwedge_{i=1}^{M_j}\fact\Psi(v_{ji})=0$, so that $\bigwedge_{i=1}^{M_j}\fact\Psi(v_{ji})\leq 0$ for each $j=1,\dotsc,N$. Take such $j$. By hypothesis, we have $0\in\co\{v_{j1},\dotsc,v_{jM_j}\}$. \cref{res:bleier_one} then shows that $\bigwedge_{i=1}^{M_j} v_{ji}\leq 0$. Hence  $x=\bigvee_{j=1}^N\bigwedge_{i=1}^{M_j} v_{ji}\leq 0$. Since also $\fact\Psi(-x)=0$, we likewise conclude that  $-x\leq 0$, so that $x=0$, as required.
\end{proof}

The conclusion of the argument relies on convex analysis in Hausdorff locally convex topological vector spaces.

\begin{theorem}\label{res:free_vector_lattice_over_vector_space_as_lattice_of_functions}
	
	Let $V$ be a vector space, and let $(j,\FVSVL{V})$ be a free vector lattice over $V$. Take a linear subspace $L^\sharp$ of $V^\sharp$ that separates the points of $V$. Define the linear map $\Psi:V\to\Fun(L^\sharp)$ by setting
	\[
	[\Psi(v)](l^\sharp)\coloneqq l^\sharp(v)
	\]
	for $v\in V$ and $l^\sharp\in L^\sharp$. Then the unique extension of $\Psi$ to a vector lattice homomorphism $\fact{\Psi}: \FVSVL{V}\to\Fun(L^\sharp)$ is injective, and establishes a vector lattice isomorphism between $\FVSVL{V}$ and the vector sublattice $\mathcal F$ of $\Fun(L^\sharp)$ that is generated by the linear subspace $\Psi(V)$ of $\Fun(L^\sharp)$. Consequently, $(\Psi,\mathcal F)$ is a free vector lattice over $V$.
\end{theorem}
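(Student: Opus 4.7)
The plan is to apply \cref{res:bleier_two} with $E = \FVSVL{V}$, $F = \Fun(L^\sharp)$, and with the generating linear subspace taken to be $V$ itself, where we identify $V$ with $j(V) \subseteq \FVSVL{V}$ as in the preceding discussion. Once the hypothesis of \cref{res:bleier_two} is verified, injectivity of $\fact\Psi$ is immediate, and the remaining assertions follow by a brief formal argument.

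The main step is therefore to show that, whenever $k \geq 1$ and $v_1, \ldots, v_k \in V$ satisfy $\bigwedge_{i=1}^k \fact\Psi(v_i) \leq 0$ in $\Fun(L^\sharp)$, we have $0 \in \co\{v_1, \ldots, v_k\}$. Unwinding the definition of $\Psi$ and using that infima in $\Fun(L^\sharp)$ are computed pointwise, the hypothesis says precisely that for every $l^\sharp \in L^\sharp$ there is some index $i$ with $l^\sharp(v_i) \leq 0$. I would argue the contrapositive: assuming $0 \notin \co\{v_1, \ldots, v_k\}$, I would exhibit an $l^\sharp \in L^\sharp$ with $l^\sharp(v_i) > 0$ for every $i$. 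To do so, equip $V$ with the weak topology $\sigma(V, L^\sharp)$, which is Hausdorff and locally convex precisely because $L^\sharp$ separates the points of $V$, and whose continuous dual is exactly $L^\sharp$. The set $\co\{v_1, \ldots, v_k\}$ is the image of the standard simplex in $\RR^k$ under a continuous affine map, hence a compact convex subset of $V$, disjoint from the compact convex set $\{0\}$. A standard strict separation theorem in Hausdorff locally convex spaces then produces some $l^\sharp \in L^\sharp$ and real numbers $\alpha < \beta$ with $l^\sharp(0) < \alpha < \beta \leq l^\sharp(v_i)$ for every $i$ (after possibly replacing $l^\sharp$ by $-l^\sharp$), which is the desired contradiction.

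With injectivity established, the image $\fact\Psi(\FVSVL{V})$ is a vector sublattice of $\Fun(L^\sharp)$ containing $\Psi(V)$, so it contains $\mathcal F$; conversely, since $\FVSVL{V}$ is generated as a vector lattice by $V$ and $\fact\Psi$ preserves the linear and lattice operations, the image is contained in the vector sublattice of $\Fun(L^\sharp)$ generated by $\Psi(V)$, which is $\mathcal F$. Hence $\fact\Psi$ restricts to a vector lattice isomorphism between $\FVSVL{V}$ and $\mathcal F$. The final assertion that $(\Psi, \mathcal F)$ is itself a free vector lattice over $V$ is then a formality: given a linear map $\varphi: V \to E^\prime$ into a vector lattice $E^\prime$, the universal property of $\FVSVL{V}$ supplies a unique vector lattice homomorphism $\fact\varphi: \FVSVL{V} \to E^\prime$ extending $\varphi$, and $\fact\varphi \circ \fact\Psi^{-1}: \mathcal F \to E^\prime$ is the required factorisation through $\Psi$; uniqueness is clear because $\Psi(V)$ generates $\mathcal F$ as a vector lattice.

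The main (indeed, the only nontrivial) obstacle is the strict separation step; the rest is formal, modulo the two Bleier-type lemmas already in hand. The essential inputs for that step\textemdash that $\co\{v_1, \ldots, v_k\}$ is $\sigma(V, L^\sharp)$-compact, and that the continuous dual of $(V, \sigma(V, L^\sharp))$ is exactly $L^\sharp$\textemdash are precisely the convex-analytic facts foreshadowed by the remark immediately preceding the theorem.
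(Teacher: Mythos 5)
Your proposal is correct and takes essentially the same route as the paper: both reduce everything to verifying the hypothesis of \cref{res:bleier_two} for the generating subspace $V$, equip $V$ with the weak topology $\sigma(V,L^\sharp)$ (so that the continuous dual is exactly $L^\sharp$), and use the compactness of $\co\ulb v_1,\dotsc,v_k\urb$ together with Hahn--Banach separation. The only difference is cosmetic: the paper argues directly by writing the compact convex hull as an intersection of closed affine half-spaces and checking that $0$ lies in each of them, whereas you argue the contrapositive via strict separation of $\ulb 0\urb$ from the hull\textemdash two interchangeable corollaries of the same separation theorem.
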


\begin{proof}
	Since $\FVSVL{V}$ is generated as a vector lattice by its linear subspace $V$, all will be clear once we show that $\fact\Psi$ is injective. For this,  we verify that the hypothesis in \cref{res:bleier_two} is satisfied for the linear subspace $V$ of $\FVSVL{V}$ that generates $\FVSVL{V}$ as a vector lattice. For this, we introduce the weak topology $\sigma(V,L^\sharp)$ on $V$. Since $L^\sharp$ separates the points of $V$, it provides $V$ with the structure of a Hausdorff locally convex topological vector space such that the $\sigma(V,L^\sharp)$-continuous linear functionals on $V$ are precisely the elements of $L^\sharp$; see \cite[Theorem~3.10]{rudin_FUNCTIONAL_ANALYSIS_SECOND_EDITION:1991}, for example.
	
	Suppose that $k\geq 1$ and that $v_1,\dotsc,v_k\in V$ are such that $\bigwedge_{i=1}^k \fact\Psi(v_i)\leq 0$ in $\Fun(L^\sharp)$ or, equivalently, that $\bigwedge_{i=1}^k l^\sharp(v_i)\leq 0$ for all $l^\sharp\in L^\sharp$. We are to show that $0\in\co\{v_1,\dotsc,v_k\}$.
	
	For this, we first note that $\co\{v_1,\dotsc,v_k\}\subseteq\mathrm{span}\,\{v_1,\dotsc,v_k\}$, which is a finite-dimensional linear subspace of $V$. Since there is only one topology on $\mathrm{span}\,\{v_1,\dotsc,v_k\}$ that makes it into a Hausdorff topological vector space (see \cite[Theorem~1.21]{rudin_FUNCTIONAL_ANALYSIS_SECOND_EDITION:1991}, for example), and since the convex hull of finitely many points in a Euclidean space is compact (it is the continuous image of a compact simplex), we see that $\co\{v_1,\dotsc,v_k\}$ is a compact subset of $\mathrm{span}\{v_1,\dotsc,v_k\}$ when this linear subspace of $V$ is supplied with the induced $\sigma(V,L^\sharp)$-topology. Hence $\co\{v_1,\dotsc,v_k\}$ is also a $\sigma(V,L^\sharp)$-compact convex subset of $V$.
	
	A non-empty compact convex subset of a Hausdorff locally convex topological vector space is the intersection of the closed affine half-spaces containing it; see \cite[Corollary~3.11]{conway_A_COURSE_IN_FUNCTIONAL_ANALYSIS_SECOND_EDITION:1990}, for example. Since the $\sigma(V,L^\sharp)$-continuous linear functionals on $V$ are precisely the elements of $L^\sharp$, we see that there exist an index set $A$ and, for each $\alpha\in A$, an element $l_\alpha^\sharp$ of $L^\sharp$ and a real number $r_\alpha$ such that
	\begin{equation}\label{eq:intersection}
		\co\{v_1,\dotsc,v_k\}=\bigcap_{\alpha\in A}\left\{v\in V: l_\alpha^\sharp(v)\geq r_\alpha\right\}.
	\end{equation}
	Take $\alpha_0\in A$. Since $\bigwedge_{i=1}^k l^\sharp(v_i)\leq 0$ for all $l^\sharp\in L^\sharp$, we have $\bigwedge_{i=1}^k l_{\alpha_0}^\sharp(v_i)\leq 0$. Hence we can take an $i_0$ with $1\leq i_0\leq k$ such that $l_{\alpha_0}^\sharp(v_{i_0})\leq 0$. Since $v_{i_0}$ is an element of $\co\{v_1,\dotsc,v_k\}$, it is an element of all closed affine half-spaces occurring in the intersection in the right hand side of \cref{eq:intersection}. In particular, we have that $v_{i_0}\in\left\{v\in V: l_{\alpha_0}^\sharp(v)\geq r_{\alpha_0}\right\}$. The fact that $l_{\alpha_0}^\sharp(v_{i_0})\leq 0$ then implies that $r_{\alpha_0}\leq 0$, so that $0\in\left\{v\in V: l_{\alpha_0}^\sharp(v)\geq r_{\alpha_0}\right\}$. Since $\alpha_0\in A$ was arbitrary, it is now clear from \cref{eq:intersection} that $0\in\co\{v_1,\dotsc,v_k\}$, as desired.
	
	By \cref{res:bleier_two}, $\fact{\Psi}$ is injective. It is then immediate that $\FVSVL{V}$ is Archime\-dean.
\end{proof}

The following is immediate from \cref{res:free_vector_lattice_over_vector_space_as_lattice_of_functions}.

\begin{corollary}\label{res:consequences_for_free_vector_lattice_over_vector_space}
	Let $V$ be a vector space, and let $(j,\FVSVL{V})$ be a free vector lattice over $V$. Then $\FVSVL{V}$ is Archimedean.
	Take a linear subspace $L^\sharp$ of $V^\sharp$ and, for $l^\sharp\in L^\sharp$, let $\fact{l^\sharp}:\FVSVL{V}\to\RR$ be the unique vector lattice homomorphism such that $\fact{l^\sharp}\circ j = l^\sharp$. If $L^\sharp$ separates the points of $V$, then the linear subspace $\{\fact{l^\sharp}: l^\sharp\in L^\sharp\}$ of $\FVSVL{V}^\sharp$ separates the points of $\FVSVL{V}$.
\end{corollary}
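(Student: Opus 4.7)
The plan is to extract both conclusions directly from the embedding $\fact{\Psi}:\FVSVL{V}\hookrightarrow\Fun(L^\sharp)$ produced by \cref{res:free_vector_lattice_over_vector_space_as_lattice_of_functions}. First I would dispose of the Archimedean claim. The statement as given does not presuppose the existence of any point-separating $L^\sharp$, so I would apply \cref{res:free_vector_lattice_over_vector_space_as_lattice_of_functions} with $L^\sharp\coloneqq V^\sharp$, which separates the points of $V$ by an elementary Hamel basis argument. The theorem then realises $\FVSVL{V}$ as a vector sublattice of $\Fun(V^\sharp)$. Since $\Fun(V^\sharp)$, equipped with the pointwise order, is Archimedean, and since being Archimedean is inherited by vector sublattices, $\FVSVL{V}$ is Archimedean.

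For the separation claim I now work with the given $L^\sharp$ that separates the points of $V$. The key observation is that, for each $l^\sharp\in L^\sharp$, the evaluation map $\mathrm{ev}_{l^\sharp}:\Fun(L^\sharp)\to\RR$ sending $f\mapsto f(l^\sharp)$ is a vector lattice homomorphism. Composing with the vector lattice homomorphism $\fact{\Psi}$ provided by \cref{res:free_vector_lattice_over_vector_space_as_lattice_of_functions} yields a vector lattice homomorphism $\mathrm{ev}_{l^\sharp}\circ\fact{\Psi}:\FVSVL{V}\to\RR$. For $v\in V$, unwinding the definition of $\Psi$ gives
\[
(\mathrm{ev}_{l^\sharp}\circ\fact{\Psi})(j(v))=[\Psi(v)](l^\sharp)=l^\sharp(v),
\]
so $\mathrm{ev}_{l^\sharp}\circ\fact{\Psi}$ agrees with $l^\sharp$ after precomposition with $j$. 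The uniqueness part of the universal property of $(j,\FVSVL{V})$ then forces $\mathrm{ev}_{l^\sharp}\circ\fact{\Psi}=\fact{l^\sharp}$.

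Once this identification is in hand, the separation statement is a one-line consequence of the injectivity of $\fact{\Psi}$. Indeed, if $\xi,\eta\in\FVSVL{V}$ are distinct, then $\fact{\Psi}(\xi)\neq\fact{\Psi}(\eta)$ as elements of $\Fun(L^\sharp)$, so there exists $l^\sharp\in L^\sharp$ with $[\fact{\Psi}(\xi)](l^\sharp)\neq[\fact{\Psi}(\eta)](l^\sharp)$; by the identification just established this reads $\fact{l^\sharp}(\xi)\neq\fact{l^\sharp}(\eta)$, which is the separation statement. I do not anticipate a serious obstacle here: the entire argument is a routine repackaging of \cref{res:free_vector_lattice_over_vector_space_as_lattice_of_functions}, with the only mild subtlety being the brief verification, via the uniqueness clause in \cref{def:free_object}, that evaluation at $l^\sharp$ composed with $\fact{\Psi}$ is exactly the canonical extension $\fact{l^\sharp}$ of $l^\sharp$.
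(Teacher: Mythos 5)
Your argument is correct and is exactly the routine unpacking that the paper intends when it declares the corollary ``immediate'' from \cref{res:free_vector_lattice_over_vector_space_as_lattice_of_functions}: take $L^\sharp=V^\sharp$ for the Archimedean claim, and identify $\fact{l^\sharp}$ with $\mathrm{ev}_{l^\sharp}\circ\fact{\Psi}$ via the uniqueness clause of \cref{def:free_object} so that injectivity of $\fact{\Psi}$ yields the separation claim. No gaps; this matches the paper's (implicit) proof.
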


We conclude with two comments on \cref{res:free_vector_lattice_over_vector_space_as_lattice_of_functions}.

\begin{remark}\label{rem:minimal_model_for_free_vector_lattie_over_vector_space}
	As already mentioned above, the free vector lattice $\FVSVL{V}$ over a vector space $V$ is the same as the free vector lattice $\FSETVS{S}$ over a basis $S$ of $V$. It is known (see \cite[Theorem~2.4]{baker:1968} or \cite[Theorem~2.3]{bleier:1973}) that $\FSETVS{S}$ can be realised as a vector lattice of functions. Hence the consequence of  \cref{res:free_vector_lattice_over_vector_space_as_lattice_of_functions} that $\FVSVL{V}$ can be realised as a vector lattice of functions is already clear. The added value of \cref{res:free_vector_lattice_over_vector_space_as_lattice_of_functions} lies in the freedom to choose any separating linear subspace $M^\sharp$ of $V^\sharp$ as underlying point set for a realisation of $\FVSVL{V}$ as a vector lattice of functions. To illustrate this, we recall how $\FSETVL{S}$ is classically realised as a vector lattice of functions. For $s\in S$, we define $j: S\to\Fun(\Fun(S))$ by setting $[j(s)](f)\coloneqq f(s)$ for $f\in\Fun(S)$. Let $\mathcal F$ be the vector sublattice of $\Fun(\Fun(S))$ that is generated by $j(S)$. Then $(j,\mathcal F)$ is a free vector lattice over $S$; see \cite[Theorem~2.3]{bleier:1973}. In the case where $S$ is a basis of a vector space $V$, then $\Fun(S)$ can be identified with $V^\sharp$. Using this in the canonical isomorphism between $\FVSVL{V}$ and $\FSETVL{S}$, one sees that the classical result implies \cref{res:free_vector_lattice_over_vector_space_as_lattice_of_functions} for the maximal choice $L^\sharp=V^\sharp$. For our discussion on the free Banach lattice over a Banach space $E$, this consequence of the classical result is, however, not sufficient. When one wants to have an intuition why $\FBSBL{E}$ can be realised as a Banach lattice of functions on $E^\ast$, then the fact that $\FVSVL{E}$ can be realised as a vector lattice of functions on $E^\sharp$ is not sufficient. One needs to be able to choose its separating linear subspace $E^\ast$ in \cref{res:free_vector_lattice_over_vector_space_as_lattice_of_functions} in order to know that  $\FVSVL{E}$ can be realised as a vector lattice of functions on $E^\ast$.
\end{remark}

\begin{remark}\label{rem:application_to_free_vector_lattices_over_sets}
	Let $S$ be a non-empty set. We shall now use \cref{res:free_vector_lattice_over_vector_space_as_lattice_of_functions} to obtain a realisation	of $\FSETVL{S}$ as a vector lattice of functions on an underlying point set that, for infinite $S$, is strictly smaller than the classical one $\Fun(S)$. This is done by exploiting the fact that $\FSETVS{S}=\FVSVL{\FSETVS{S}}$. We start by noting that every vector space with a basis indexed by $S$ is a free vector space over $S$. Consider the vector space $\Fun(S)$ of all real-valued functions on $S$. For each $s\in S$, define $\delta_{s}:S\to\RR$ by setting
	\begin{equation}\label{eq:delta}
		\delta_{s}(s^\prime)\coloneqq
		\begin{cases} 1 & \text{ when } s^\prime=s;\\
			0&\text{ when } s^\prime\neq s.
		\end{cases}
	\end{equation}
	Then the linear span of $\{\delta_s: s\in S\}$ in $\Fun(S)$, i.e., the vector space $\Fun_{00}(S)$, is a free vector space over $S$. We can now apply \cref{res:free_vector_lattice_over_vector_space_as_lattice_of_functions} to realise the free vector lattice $\FVSVL{\Fun_{00}(S)}$ (which coincides with $\FSETVL{S}$) as a vector lattice of positively homogeneous functions on any linear subspace $L^\sharp$ of $\Fun_{00}(S)^\sharp$ that separates the points of $\Fun_{00}(S)$: it is the vector sublattice of $\Fun(L^\sharp)$ that is generated by the functions $l^\sharp\mapsto l^\sharp(f)$ for $f\in\Fun_{00}(S)$. Since the $\delta_s$ span $\Fun_{00}(S)$, it is also the vector sublattice of $\Fun(L^\sharp)$ that is generated by the functions $l^\sharp\mapsto l^\sharp(\delta_s)$ for $s\in S$. What can we take for $L^\sharp$? Obviously, $\Fun_{00}(S)^\sharp$ itself will do. This vector space can be identified with $\Fun(S)$, and upon doing this one sees that \cref{res:free_vector_lattice_over_vector_space_as_lattice_of_functions} for this choice of $L^\sharp$ yields the classical realisation of $\FSETVL{S}$ as a sublattice of $\Fun(\Fun(S))$. There is, however, another canonical choice for a separating linear subspace of  $\Fun_{00}(S)^\sharp$, namely, (an isomorphic image of) $\Fun_{00}(S)$ itself. Indeed, when $f=\sum_{s\in S}\lambda_s\delta_s$ is an expression of an element $f$ of  $\Fun_{00}(S)$ as a finite linear combination of the basis elements $\delta_s$ of $\Fun_{00}(S)$, then one can define $f^\sharp:\Fun_{00}(S)\to\RR$ by setting $f^\sharp(g)\coloneqq\sum_{s,s^\prime\in S}\lambda_s\lambda_{s^\prime}$ for $g=\sum_{s^\prime\in S}\lambda_{s^\prime}\delta_{s^\prime}\in\Fun_{00}(S)$. For this choice of $L^\sharp$, \cref{res:free_vector_lattice_over_vector_space_as_lattice_of_functions} yields the following.
	It shows that $\FSETVL{S}$ can also be realised as a vector lattice of functions on $\Fun_{00}(S)$ which, for infinite $S$, is a proper subset of $\Fun(S)$.

	\begin{proposition}\label{res:minimal_model}
		Let $S$ be a non-empty set. For $s\in S$, define $j(s):\Fun_{00}(S)\to\RR$ by setting $j(s)\coloneqq f(s)$ for $f\in\Fun_{00}(S)$. Let $\mathcal F$ be the vector sublattice of $\Fun(\Fun_{00}(S))$ that is generated by $j(S)$. Then $(j,\mathcal F)$ is a free vector lattice over $S$.	
	\end{proposition}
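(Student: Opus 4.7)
The plan is to derive the proposition from \cref{res:free_vector_lattice_over_vector_space_as_lattice_of_functions} using the transitivity of free constructions recorded after \cref{def:free_object}. The first step is to note that $(\iota_0,\Fun_{00}(S))$, where $\iota_0:S\to\Fun_{00}(S)$ sends $s$ to $\delta_s$, is a free vector space over $S$: any set map $\varphi:S\to W$ into a vector space extends uniquely via $\sum_s\lambda_s\delta_s\mapsto\sum_s\lambda_s\varphi(s)$. Applying transitivity to the chain $\SET\supset\VS\supset\VL$, any realisation of $\FVSVL{\Fun_{00}(S)}$ with accompanying morphism $k:\Fun_{00}(S)\to\FVSVL{\Fun_{00}(S)}$ becomes a realisation of $\FSETVL{S}$ with accompanying morphism $k\circ\iota_0$.

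Next I would apply \cref{res:free_vector_lattice_over_vector_space_as_lattice_of_functions} to $V=\Fun_{00}(S)$ with $L^\sharp\coloneqq\iota(\Fun_{00}(S))\subseteq\Fun_{00}(S)^\sharp$, where $\iota(f)(g)\coloneqq\sum_{s\in S}f(s)g(s)$ (well-defined because $f$ has finite support). This $L^\sharp$ is a separating subspace of $\Fun_{00}(S)^\sharp$: for any nonzero $g\in\Fun_{00}(S)$ there is some $s_0\in S$ with $g(s_0)\neq 0$, and then $\iota(\delta_{s_0})(g)=g(s_0)\neq 0$. The theorem realises $\FVSVL{\Fun_{00}(S)}$ as the vector sublattice of $\Fun(L^\sharp)$ generated by $\Psi(\Fun_{00}(S))$, where $[\Psi(v)](l^\sharp)\coloneqq l^\sharp(v)$.

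Finally, I would unwind the identification. The map $\iota:\Fun_{00}(S)\to L^\sharp$ is a linear isomorphism, so pulling back along it identifies $\Fun(L^\sharp)$ with $\Fun(\Fun_{00}(S))$. Under this identification, $\Psi(\delta_s)$ corresponds to the function $g\mapsto\iota(g)(\delta_s)=g(s)$ on $\Fun_{00}(S)$, which is precisely $j(s)$ from the statement. Since $\{\delta_s:s\in S\}$ spans $\Fun_{00}(S)$ and $\Psi$ is linear, the vector sublattice of $\Fun(L^\sharp)$ generated by $\Psi(\Fun_{00}(S))$ corresponds to the vector sublattice $\mathcal F$ of $\Fun(\Fun_{00}(S))$ generated by $\{j(s):s\in S\}$, and the accompanying morphism from the first step becomes $s\mapsto j(s)$. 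Hence $(j,\mathcal F)$ is a free vector lattice over $S$. The main theorem does essentially all the work, so no genuine obstacle is anticipated beyond the purely bookkeeping verification that $L^\sharp$, chosen as the canonical image of $\Fun_{00}(S)$ rather than the larger $\Fun_{00}(S)^\sharp\cong\Fun(S)$, separates the points of $\Fun_{00}(S)$.
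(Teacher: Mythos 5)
Your proposal is correct and follows essentially the same route as the paper: realise $\Fun_{00}(S)$ as the free vector space over $S$, invoke transitivity of free objects for $\SETscript\supset\VSscript\supset\VLscript$, and apply \cref{res:free_vector_lattice_over_vector_space_as_lattice_of_functions} with $L^\sharp$ the separating copy of $\Fun_{00}(S)$ inside $\Fun_{00}(S)^\sharp$ given by the canonical pairing. Your explicit formula $\iota(f)(g)=\sum_{s\in S}f(s)g(s)$ and the verification via $\iota(\delta_{s_0})$ match the paper's intent (and are in fact cleaner than the formula as printed there), and the final unwinding of the identification is exactly the paper's bookkeeping.
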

	
	The above approach to $\FSETVS{S}$ via \cref{res:free_vector_lattice_over_vector_space_as_lattice_of_functions} and the relation $\FSETVS{S}=\FVSVL{\FSETVS{S}}$ may also prompt one to think differently of the role of $\Fun_{00}(S)$ in \cref{res:minimal_model} and of $\Fun(S)$ in the classical result. Conceptually, they are not merely underlying point sets for realisations of $\FSETVL{S}$ as vector lattices of functions, as one might be inclined to think at first sight. They should be viewed as separating linear subspaces of $\Fun_{00}(S)^\sharp$, and $\FSETVL{S}$ can be realised as a vector lattice of positively homogeneous functions on each of these two vector spaces. Furthermore, it is a consequence of \cref{res:minimal_model} that the free vector lattice over a non-empty set $S$ can be realised as a vector lattice of positively homogeneous functions on the free vector space over $S$.
\end{remark}

\subsection*{Acknowledgement} The author is grateful to the anonymous referee for their careful reading of the manuscript.

\bibliographystyle{plain}
\urlstyle{same}

\bibliography{general_bibliography}

\end{document}